\newtheorem{theorem}{Theorem}[section]
\newtheorem{fact}[theorem]{Fact}
\newtheorem{remark}[theorem]{Remark}
\newtheorem{prop}[theorem]{Proposition}
\newtheorem{claim}[theorem]{Claim}
\theoremstyle{definition}
\newtheorem{definition}[theorem]{Definition}
\newtheorem{example}[theorem]{Example}
\newcommand{\NN}{{\mathbb{N}}}
\newcommand{\ZZ}{{\mathbb{Z}}}
\newcommand{\sub}{\subseteq}
\newcommand{\sN}[1]{_{#1\in \NN}}
\newcommand{\PI}[1]{\Pi^0_{#1}}
\newcommand{\bi}{\begin{itemize}}
\newcommand{\ei}{\end{itemize}}
\newcommand{\bc}{\begin{center}}
\newcommand{\ec}{\end{center}}
\newcommand{\tp}[1]{2^{#1}}
\newcommand{\ex}{\exists}
\newcommand{\la}{\langle}
\newcommand{\ra}{\rangle}
\newcommand{\n}{\noindent}
\newcommand{\sss}{\sigma}
\newcommand{\aaa}{\alpha}
\newcommand \seq[1]{{\left\langle{#1}\right\rangle}}
\newcommand\+[1]{\mathcal{#1}}
\newcommand{\wt}{\widetilde}
\newcommand{\ol}{\overline}
\newcommand{\ul}{\underline}
\DeclareMathOperator{\UT}{\text{\it UT}}
\begin{document}

\title{Word automatic  groups of nilpotency class~2}

  \author{Andr\'e Nies and Frank Stephan}

\maketitle

\begin{abstract}  We consider word automaticity for groups that are nilpotent of class $2$ and have exponent a prime $p$. We show that the infinitely generated free group in this variety is not word automatic. In contrast, the infinite extra-special $p$-group $E_p$ is word automatic, as well as an intermediate group $H_p$ which has an infinite centre. In the last section we introduce a method to show automaticity of central extensions of abelian groups via co-cycles. \end{abstract}

\section{Introduction}
A   structure  in a finite signature is called \emph{word automatic}  (or FA-presen\-table)  if the domain is a set of strings that can be recognized by a finite automata (FA) over an alphabet~$\Sigma$. The atomic relations can be recognized by FA as well, as follows: 
 To check  whether an atomic relation holds for elements $a_1, \ldots, a_n$ of the domain, the strings   representing these elements are extended to strings $\sss_1, \ldots,   \sss_n$  of the same length by  using a filler symbol $\Diamond$ that is not in $\Sigma$; one requires that a finite  automaton 
 recognizes the language over $(\Sigma \cup \{\Diamond\})^n$ that consists of  the  strings obtained by stacking such strings $\sss_1, \ldots, \sss_n$ on top of each other.

  Word automatic structures were  first considered by Hodgson~\cite{Hodgson:76,Hodgson:83}  who used them  to  give a new proof that the first-order theory of $(\NN, +)$ is decidable. They were  studied in depth   by  Khoussainov and Nerode~\cite{Khoussainov.Nerode:94},       the  founding paper of the area. 
 

Since finite automata  are    devices of a very limited computational power, it can be  challenging to find nontrivial examples of word automatic structures in a particular class; in some cases, such as for the class of Boolean algebras, it can be shown that only the obvious structures are word automatic~\cite{Khoussainov.etal:07}. 

  The  class of  word automatic groups (not to be confused with automatic groups in the sense of Thurston) has good closure properties; for instance, it is closed under finite direct products,  and quotients by regular normal subgroups. In     the abelian setting is also closed under a certain type    of FA-recognizable amalgamation~\cite{Nies.Semukhin:09}. We note that  for any   finite group $S$, the direct power $S^{(\omega)}$   is   word automatic. 
 
  \medskip
  
\n \emph{The abelian case.}     
Nies  and Semukhin~\cite{Nies.Semukhin:09}  constructed word automatic  torsion-free indecomposable abelian groups of rank  $ 2$ and larger. This sets them apart from examples such as $\ZZ \times \ZZ$, which can be considered word automatic in a trivial way.
Their examples involved divisibility by more than one prime. It is still open whether a group of this kind can be obtained as a subgroup of $(\mathbb Z[1/p])^n$ for any $n \ge 2$.   Braun and Str\"ungmann~\cite{Braun.Struengmann:11}, building on methods of Tsankov~\cite{Tsankov:11}, provided strong restrictions on torsion free abelian groups. In particular,   such groups  have finite rank.

 \medskip

	  \n \emph{New examples of word automatic groups.}  	In this paper we study indecomposable word automatic   groups in  varieties just beyond the abelian.   Recall that a group  $G$ is \emph{nilpotent  of class 2}   if  the law $[[x,y],z]$ holds in~$G$. In other words,  $G$ is a central extension of an abelian group by another.     Fix an odd prime $p$, and    	let  $\+ N_{2,p}$ denote  the variety of groups that are nilpotent of   class~$2$  and have   exponent~$p$ (that is, $x^p= 1$ for each $x$).    Note that for each   $G \in \+ N_{2,p}$, the centre $Z(G)$ is    an elementary abelian $p$-group, and   can thus be seen as a vector space over the field $R$.  The same holds for the central quotient $G/Z(G)$.    
	
	 Let $R=GF(p)$ be the field of $p$ elements. The new  examples of word automatic groups are infinitely generated variants of the
unitriangular group
\bc  {$ \UT_3(R)= \left \{ \left(  \begin{matrix}
   1 & a &  c \\
   0 &  1 &  b \\
   0 &   0 &  1   \end{matrix} \right)  \colon \, a, b,c \in R \right \}$}. \ec

   This is  the free  group of rank 2 in $\+ N_{2,p}$. It is   generated by  $x_0=  \left(  \begin{matrix}
   1 & 1 & 0 \\
   0 &  1 &  0 \\
   0 &   0 &  1   \end{matrix} \right) $ and $ x_1=  \left(  \begin{matrix}
   1 & 0 &  0 \\
   0 &  1 &  1 \\
   0 &   0 &  1   \end{matrix} \right)$.  We have $[x_0,x_1]= z:=  \left(  \begin{matrix}
   1 & 0 &  1 \\
   0 &  1 &  0 \\
   0 &   0 &  1   \end{matrix} \right)$.  
   The centre of $ \UT_3(R)$ is the cyclic group generated by~$z$.  

 We introduce groups $G \in \+ N_{2,p}$ by  varying the definition of   $\UT_3(R)$. We   posit that $G$ have    an  infinite sequence  of distinct generators $  \seq {x_n} \sN n$.   
 If   the  commutators $[x_i,x_k]  $,  $i<k$ are linearly independent,   then $G$ is   the free group  of infinite rank in $\+ N_{2,p}$, which we will denote by $F_\infty$. We   will show that this group  is not word automatic. In a nutshell, for any FA-presentation of $F_\infty$,  the linear independence of the commutators of generators would require         $p^{\binom n 2}$  strings of length $O(n)$ to represent the linear combinations of the $[x_i,x_k]$ for $i < k < n$, which is contradictory for large enough $n$.

In contrast,  if the commutators  $[x_i,x_k] \in Z(G)$ are dependent  in a certain strong way, then $G$  is word automatic. The simplest example is   a  group that we will denote by $E_p$, or just $E$ if $p$ is understood: one requires that there is an element $z \neq e$ such that $[x_i,x_k] = z$ for each $i<k$.  Thus, the centre is  cyclic  as in the case of $\UT_3(R)$. We note that  $E_p$ is an extra-special $p$-group in the sense of Higman and Hall: the centre is cyclic of order $p$, equals the derived subgroup, and  quotient by the centre  is an elementary abelian $p$-group (i.e., a vector space over $GF(p)$). A slightly more complex example is  the group $H_p$:   we require that $[x_i,x_k] = z_k$ for $i<k$, where  the~$z_k$ form  a basis of the centre.  

\medskip

\n \emph{Abelian subgroups of finite index.}   Nies and Thomas~\cite{Nies.Thomas:08} proved  that each finitely generated subgroup of a word automatic group  has an abelian subgroup of finite index. This   indicates that word automatic groups are close to abelian.     It remains open whether each      torsion-free word automatic  group has an abelian subgroup of finite index \cite[Question~4.5]{Nies:DescribeGroups}.   The groups $E_p$ and $H_p$ are 
 word automatic \emph{torsion} groups  without   abelian subgroups of finite index. The first example of  a group of this kind was  \cite[Example 12]{Nies.Thomas:08},  which we will revisit as Example~\ref{ex: Thomas} below.   

%

  \medskip 
\n \emph{The isomorphism problem.}   The  computational complexity of the  isomorphism  problem is a a good indicator of the complexity of a class of word automatic structures. This problem asks whether two   presentations given by finite automata describe   isomorphic structures. It was a central topic in Khoussainov, Nies, Rubin and Stephan~\cite{Khoussainov.etal:07}. They showed that  the isomorphism problem is  $\Sigma^1_1$-complete  for word automatic graphs, but decidable for word automatic Boolean algebras. Kuske, Liu and Lohrey~\cite{Kuske.etal:13} proved  that  the isomorphism problem for word automatic equivalence relations  is $\PI 1$-complete. For abelian groups, as well as    groups at large, the  complexity is unknown (\cite[Question 5.1]{LogicBlog:21}). To show that the isomorphism problem for a class of word automatic structures  is undecidable,  one   attempts   to find constructions for sufficiently complicated structures in the class in order to encode some undecidable problem. In contrast, towards showing  its  decidability,  one attempts to  provide restrictions on   structures in the class. 
It is unknown at present whether   the isomorphism problem for the class of  word automatic   groups in $\+ N_{2,p}$ decidable.    The present paper follows both approaches to this,  by providing both examples and restrictions for this class.
 
   \medskip
   
\n \emph{Some model theory of   extra-special $p$-groups.} We note that the extra-special $p$-groups $E_p$ have appeared in numerous places in the literature. For instance, let us 	briefly review some model theoretic properties of these groups  for $p\neq 2$,  due to Felgner~\cite{Felgner:75}. (He denotes these groups by  $G(p, \le)$ where $\le$ is the usual  ordering of $\omega$.)
 On page   423 he  provides a recursive  axiom system for the theory of $E_p$.  
It  expresses that the group has  exponent $p$, and that the centre is cyclic of order $p$ and contains the derived subgroup, which  is non-trivial. Furthermore, it expresses that the  quotient by the centre is infinite,  using an infinite list of axioms. This implies that $E_p$ is $\omega$-categorical, 
 since up to isomorphism there is only one countably infinite extra-special group of exponent $p$ as shown in Newman~\cite{Newman:60}.

  A group  is called pseudofinite if every  first-order sentence that holds in it also  holds in a finite group (for background on this notion see~\cite{Houcine.Point:13}).   Note that for each odd $k$ there is an extra-special  group of exponent $p$ and   order $p^k$.  
  So, any  finite set of the axioms can be satisfied in a finite model. Hence $E_p$ is pseudo-finite.

   \section{Preliminaries on word automaticity and on groups in $\+ N_{2,p}$}
\label{s:prelim}

\n \emph{Presentations via finite automata: facts  and examples}   \begin{definition} \label{def:FA} One says that 
 a   structure $A$ in a finite signature is   \emph{word automatic} (or   FA  presentable) if  the elements of the domain can be represented, possibly ambiguously,  by the strings in a regular language $D_A$ over an alphabet $\Sigma$ such that the following holds. For each  atomic  relation of  the type $Ra_1, \ldots, a_n$, or $f(a_1, \ldots, a_n)= b$, or $a=b$,  where $R,f$ are $n$-ary relation, respectively  function symbols and $a_1, \ldots, a_n, b \in A$, there is a     finite automaton that recognizes it in the sense discussed at the beginning of the paper. An \emph{FA presentation} is a collection of FA as above. \end{definition}

\begin{example} The structure  $(\NN,+)$ is word automatic via  the usual binary expansion of a natural number.  The  {alphabet} is $\Sigma= \{0,1\}$, and the  { domain}  consists of the  strings ending in  $1$,  and the empty string. 
One  represents a number $n$ by a binary string  $\aaa$, with the \emph{least} significant digit first.  Thus $n = n_\aaa = \sum_{i< |\aaa|} \aaa(i) \tp  i$.
The empty string denotes $0$.
  A finite automaton over the alphabet $(\Sigma \cup \{\Diamond\})^3$  checks the correctness of  the sum  $n_\aaa + n_\beta = n_\gamma$ via the  carry bit procedure, where the carry bit moves to   the right.  
For instance,   if $\aaa = 101, \beta = 11001$ and $\gamma =00011$, the automaton checks that the sum is correct   by accepting the string
\[ \begin{array}{|c|c|c|c|c|}\hline 
   1  & 0 & 1  & \Diamond& \Diamond   \\
     1 & 1 & 0 &  0  &  1\\
   0 &   0 &  0 &  1 &  1 \\  \hline \end{array} . \]
   \end{example} 

 In the  definition of FA presentations above,  note that   for terms $s,t$ the equation $s=t$ is an atomic relation. So,   this definition allows for  equality to  be a nontrivial equivalence relation $E$ on $D_A$, which   on occasion is useful in defining FA presentations. However, since the length-lexicographical ordering $\le_L$ on $D_A$ is FA-recognizable, one can   replace $D_A$ by the regular set $\{x \in D_A \colon \forall y \, [ y E x \, \to \,  x \le_L y]\}$, and thereby  uniquely represent  elements by strings.

  Given an  FA presentation of a  structure $\mathcal A$ and a formula
$\phi$  (possibly with parameters), one can effectively determine a
 finite automaton recognizing the relation  on $\mathcal A$ defined by $\phi $.
    The  proof is by  induction on $|\phi|$.  To deal with  existential quantifiers, one uses that
for each non-deterministic finite automaton, there is a deterministic one that recognizes the same language.   Hence,  to show that a group $(G, \circ, {}^{-1})$ is word automatic, it suffices to provide an FA recognizing  the binary group operation $\circ$; the unary group operation is definable from it.

Automatic groups in the sense of Epstein and Thurston (see ~\cite{Epstein:92})  are finitely generated by definition.
 In contrast, the appropriate  setting  for obtaining  interesting \emph{word} automatic groups is outside the  finitely generated, for  a finitely generated  group is word automatic if and only if  it is  has an abelian subgroup of  finite index.  Note that  word automatic groups are called finite automata presentable in~\cite{Nies:DescribeGroups}  in order to avoid confounding the two notions.

 \medskip 
 
\n \emph{Preliminaries on nilpotent-2 groups.} 
 If a sequence $\seq{x_i}\sN i$ of   generators of a group  $G$ under discussion is fixed, for   a tuple of integers~ $\aaa$ we let   \bc $\ul {\alpha} := \prod_{i< |\aaa|} x_i^{\alpha_i}$. \ec
For  group elements $x,y$,   we define the commutator by $[x,y]= x^{-1}y^{-1}xy$.  If $G$ is nilpotent of class 2,   the commutator induces an alternating bilinear form \bc $\theta_G \colon ( G/Z)^2  \to Z$ \ec where $Z= Z(G)$ and  $\theta_G(aZ, bZ) =[a,b]$ which is well-defined.
Hence, for $|\aaa| = |\beta|=n$ and central elements $c,d$,
\[ \big [ \ul {\alpha} c, \ul {\beta} d \big ]= \prod_{r<s\le n} [x_r, x_s]^{\aaa_r \beta_s - \aaa_s \beta_r}\]
This identity will  be   used below without  mention.

\medskip


\section{The group  $F_\infty$  is not  word automatic}
 Recall that by $F_\infty$ we denote the free group of infinite rank in the variety  $\+ N_{2,p}$ of groups of exponent $p$ and nilpotency class 2.  Let $\seq{x_i}\sN i$ be a sequence of free generators, and recall that we write
$ {\ul \alpha} := \prod_{i< |\aaa|} x_i^{\alpha_i}$. It is well known (\cite{Warfield:06}
that each element of $F_\infty$ has a    normal form 
\begin{equation} \label{free normal form} \ul {\alpha} \cdot  \prod_{r< s < n } [x_r,x_s]^{v_{r,s}},  \end{equation}
where $|\aaa| = n$ and $0\le v_{r,s} <p$. (To make this normal form unique, one can   require that   $\aaa_{n-1} \neq 0 \vee \ex r [v_{r,n-1}\neq 0]$; i.e., $n$ is chosen minimal.  The central elements are the ones where $\aaa$ is a string of $0$s.)

\begin{theorem}  $F_\infty$ is not word automatic.\end{theorem}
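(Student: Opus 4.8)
The plan is to derive a contradiction from an FA presentation by a counting estimate: for large $n$ I will exhibit far more than $2^{O(n)}$ distinct group elements all of whose representatives have length $O(n)$, which is impossible because a finite alphabet admits only $2^{O(n)}$ strings of that length. So suppose $F_\infty$ were word automatic over a finite alphabet $\Sigma$. After re-encoding the symbols of $\Sigma$ in binary — which only multiplies all representation lengths by the constant $\lceil\log_2|\Sigma|\rceil$ — I may assume $\Sigma=\{0,1\}$, so that there are fewer than $2^{m+1}$ strings of length $\le m$ and $\log_2 p>1$.

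First I would set up two tools. The \emph{expansion estimate}: letting $c$ be the number of states of the automaton for the group operation, if $u,v$ have representatives of length $\le m$ then $uv$ has a representative of length $\le m+c$. This is proved by pumping the accepting run of that automaton on the stacked triple $(\mathrm{rep}(u),\mathrm{rep}(v),\mathrm{rep}(uv))$ past the position where the first two tracks are exhausted, using that in a reduced presentation each group element has a unique representative (so the pumped triple would name a second representative of $uv$, a contradiction). Iterating along a balanced binary tree, a product of $t$ elements each with a representative of length $\le m$ has a representative of length $\le m+c\lceil\log_2 t\rceil$ — and similarly for any fixed term in the group operations, after enlarging $c$. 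The \emph{rank decomposition}: every alternating $GF(p)$-form on an $n$-dimensional space is a sum of at most $\lceil n/2\rceil$ forms of rank $\le 2$ (equivalently, every skew-symmetric $n\times n$ matrix over $GF(p)$ is a sum of that many matrices $\gamma\delta^{T}-\delta\gamma^{T}$). Transported through the bilinear form $\theta_G$ and combined with the normal form \eqref{free normal form}, this shows that in the free-rank-$n$ subgroup $F_n:=\langle x_0,\dots,x_{n-1}\rangle$ every central element $\prod_{r<s<n}[x_r,x_s]^{v_{r,s}}$ can be written as a product of at most $\lceil n/2\rceil$ commutators $[\ul{\gamma_l},\ul{\delta_l}]$, each $\ul{\gamma_l},\ul{\delta_l}$ being a product of at most $(p-1)n$ of the $x_i$'s.

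Next I would combine the two. If $x_0,\dots,x_{n-1}$ have representatives of length $\le\ell$, then each $\ul{\gamma_l}$, being a balanced product of at most $(p-1)n$ generators, has a representative of length $\le\ell+O(\log n)$; hence so does each commutator $[\ul{\gamma_l},\ul{\delta_l}]$, and hence so does each of the $p^{\binom n2}$ distinct central combinations $\prod_{r<s<n}[x_r,x_s]^{v_{r,s}}$, their distinctness being precisely the linear independence of the $[x_i,x_k]$. Pigeonhole then forces $p^{\binom n2}\le 2^{\ell+O(\log n)+1}$; as soon as $\ell$ is known to grow slowly enough — the target is $\ell=O(n)$ — this reads $p^{\binom n2}\le 2^{O(n)}$, which is absurd for large $n$, and the theorem follows.

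The hard part is exactly the clause about $\ell$: I must control the length of a generating set, i.e.\ produce for each $n$ some $n$ elements with $GF(p)$-linearly independent images modulo $Z(F_\infty)$ whose representatives are as short as $O(n)$. A generating set chosen purely in the abstract group carries no length bound at all, so here I would have to read short generators off the automaton itself — via short accepting runs, and pumping on the domain language and on the multiplication automaton — and it is this automaton-theoretic bookkeeping, rather than the counting or the linear algebra, that I expect to be the main obstacle.
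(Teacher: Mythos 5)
Your counting skeleton coincides with the paper's: exhibit $p^{\binom n 2}$ distinct central elements all represented by strings of length $O(n)$, and derive a contradiction by counting strings. Your auxiliary tools are sound — the constant-expansion estimate via pumping on the multiplication automaton, the balanced-tree bound $m+c\lceil\log_2 t\rceil$ for $t$-fold products, and the rank decomposition of alternating forms (though the last is not needed: a balanced product of the $O(n^2)$ commutator powers already costs only $O(\log n)$ extra length). But the step you yourself flag as ``the main obstacle'' is the entire content of the theorem, and your proposal does not resolve it. You must produce, \emph{inside the given presentation}, $n$ elements that are linearly independent modulo $Z(F_\infty)$ and simultaneously have representatives of length $O(n)$. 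Pumping on the domain language and ``short accepting runs'' yield many short strings but carry no independence guarantee (for all you know, almost all short strings could lie in the centre, or in a low-dimensional subspace modulo the centre); conversely, an abstractly chosen independent set such as the $x_i$ carries no length bound. A pure counting argument cannot close this either, since one has no control over how many short strings represent central elements. And whatever short elements you do find, you still owe a separate proof that they are independent mod the centre.

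The paper closes exactly this gap with a definability trick that your sketch is missing. It adjoins the FA-recognizable length-lexicographic order $\le_L$ to the structure and defines, by a \emph{first-order} recursion, elements $u_n$ and bounding strings $z_n$: roughly, $u_n$ is the $\le_L$-least element whose commutator map is injective on, and escapes, the initial segment $V_n=\{v : v\le_L z_n\}$, and $z_{n+1}$ is the $\le_L$-least upper bound of all $w\circ u_n^i\circ[v,u_n]$ with $v,w\in V_n$ and $i<p$. Because the step function $z_n\mapsto z_{n+1}$ is first-order definable in $(D,\circ,\le_L)$, its graph is regular, so pumping gives $|z_{n+1}|\le |z_n|+O(1)$ and hence $|z_n|=O(n)$; in parallel one proves by induction that $\la u_0,\dots,u_n\ra\subseteq V_{n+1}$ and that the commutators $[u_i,u_k]$ are linearly independent. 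This single recursion delivers the length bound and the independence at once. Without this idea (or a genuine substitute for it), your argument does not go through.
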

\begin{proof} 
Assume for a contradiction that $F_\infty$ has an FA presentation.  It      has 
as a    domain a regular  set  $D\sub \Sigma^*$ for some alphabet $\Sigma$, and has an FA-recognizable operation $\circ \colon D \times D \to D$ for the binary group operation.  As discussed in Section~\ref{s:prelim}, we may assume that the equivalence relation   denoting equality in the represented group is   equality on $D$.  Hence we can identify $F_\infty$ with the structure on the  domain $D$. 
As before,  by $\le_L$ we denote the length-lexicographical ordering on~$D$, which  is FA-recognizable as well.

  The    idea of the proof  is to     define sequences     $\seq{u_n}\sN n$ and $\seq{z_n}\sN n$ of elements of~$D$ such that the central elements $[u_i, u_k]$, $i<k<n$,  are linearly independent over the field $GF(p)$, and for each $n$, the subgroup generated by  $u_0, \ldots, u_n$ is contained in the set of strings  $\{x \in D \colon x  \le_L z_{n+1}\}$. Furthermore,    $z_{n+1}$ is obtained from $z_n$ by  applying a   function $G$  that is first-order definable in the structure on $D$ enriched by $\le_L$.  So this function is FA-recognizable. The pumping lemma  now implies  that the length of  any linear combination of the $[u_i, u_k]$,   where $i< k < n$,  is $O(n)$. This means that for large $n$ there are not enough  strings of the allowed length to accommodate  these $p^{\binom n 2}$  linear combinations.

For the detail, we verify three claims. Note that  the centre $Z= Z(F_\infty) $ is a regular set.   The first claim states that for each finite set $S\sub D$ there is $u \in D$ such that the map $ZS/Z \to Z$ given by $Zr \to [r,u]$ has range disjoint from $S$,  and is injective.  
\begin{claim} \label{cl: uS} For each finite set $S \sub D$, there is a   string $u \in D$  such that 
\bi
\item[(i)] if  $r \in S\setminus Z $ then $    [r,u]\not\in S$;
\item[(ii)] for each $  v,w \in S$ such that   $ [v,u]= [w,u] $, there is $  c \in Z $ such that $  c\circ v = w$.
\ei
Given $S$, we will write $u(S)$ for the $\le_L$-least   string $u$ satisfying the claim.
\end{claim}
\n To see this, let $u =x_k$ where $k $ is so large that only $x_i$ with $i< k$ occur in the normal form of any element of $S$. 

For (i) note that $r$ contains some $x_i$ with $i<k$, so the   normal form of $[r,u]$ contains $[x_i,x_k]$, while the normal form of an element of $S$ does not contain such commutators.

 For (ii) let $v= \ul {\alpha} d$ with $d$ central. Then the normal form of $[v,u]$ ends in $\prod_{i<k} [x_i,x_k]^{\aaa_i}$, which  determines $\aaa$.  This verifies the claim.

 \medskip
 
 We now recursively  define  the sequences $\seq{u_n}\sN n$  and $\seq{z_n}\sN n$   in $D$. 
 Let~$z_0$ be the string representing the neutral element~$1$. 
  Suppose now that  $z_n$ has been defined.  Let \bc $u_n = u(V_n)$  where  $V_n=\{v \colon v \le_L z_n\}$, \ec according to Claim~\ref{cl: uS}. Next, let $z_{n+1}$ be the $\le_L$-least string $z$ such that 
 \begin{equation} \label{eqn:zn+1} \text{for each } v,w \in V_n  \text{ and each }  {i<p}, \text{ one has }w \circ u_n^i \circ [v,u_n] \le_L z.\end{equation}
 In the   structure $(D, \circ, \le_L)$, one can define  $u_n$ from~$z_n$  in a uniform first order way. Hence,    there is a   function $G\colon D \to D $ that  is first-order definable in this  structure such  that $z_{n+1} = G(z_n)$ for each $n$. As mentioned in the introduction, its  definability  implies that its  graph of    $G$ can be recognized by a finite automaton. In particular,    we have $|z_n| = O(n)$ since by the  pumping lemma, the output of a function with regular graph is by  at most   a constant longer than the input.

In the following, we will write $u_{i,k}$ for $[u_i, u_k]$ in case that  $i\neq k$. 
\begin{claim} \label{cl:subgroup}  $\la u_0 , \ldots, u_n \ra\sub V_{n+1}$ for each $n$. \end{claim}
\n We use  induction on $n$. For $n=0$ we have $\la u_0 \ra \sub V_1$ because    in  (\ref{eqn:zn+1}) we can let $v,w$ be strings denoting the neutral element. For the inductive step, note that by the normal form (and freeness of $F_\infty$) each element of $\la u_0 , \ldots, u_n \ra$ has the form 
\bc $ y=\prod_{i \le n} u_i^{\aaa_i}   \prod_{r< s \le n } [u_{r,s}]^{\gamma_{r,s}}$ \ec
where $0\le\aaa_i,  \gamma_{r,s} <p$.
This can be rewritten as $wu_n^{\aaa_n} [v,u_n]$ where
 \bc $w= \prod_{i <  n}   u_i^{\aaa_i} \prod_{r<s < n}   [u_{r,s}]^{\gamma_{r,s}}$
 and 
 $v= \prod_{k<n} u_k^{\gamma_{k,n}}$. \ec
 By inductive hypothesis $w,v \in V_n$. So the element $y$ is in $V_{n+1}$ by  (\ref{eqn:zn+1}).
This verifies the claim.

In the next claim we view elementary abelian $p$-groups as vector spaces over the field $GF(p)$.
\begin{claim} \label{cl:indep}  \mbox{} \bi \item[(a)] The   elements $Z u_i$ are linearly independent in  $G/Z$. 
\item[(b)] The elements $u_{i,k}$ are   linearly independent in  $Z$.
\ei
\end{claim}
In both (a) and (b) we use induction over  an upper  bound $n$ on  the indices. Both statements  hold  vacuously  for $n=0$.
For (a) note that $u_{n+1}^i \not \in V_{n+1} Z$ for each $i<p$: Otherwise there is   $c\in Z $ such that $v:= u_{n+1}^i c \in V_{n+1}$. We have  $u_{n+1}^i \not \in  Z$ and hence $v\not \in Z$. Since  $[v,u_{n+1}]=1$ and $1= z_0\in V_{n+1}$, this contradicts condition  (i) of Claim~\ref{cl: uS} for $S= V_{n+1}$. Therefore, by the   Claim~\ref{cl:subgroup},  $Z\la u_{n+1}\ra  \cap  Z \la u_0, \ldots, u_n \ra =0$. 

For (b), inductively the $u_{i,k}$,  $i<k \le n$ form  a basis for  a subspace $T_0\sub  Z$.  The linear map $Zw\to  [w,u_{n+1}]$ defined on $ZV_n/Z$ is injective  by (ii) of Claim~\ref{cl: uS}. So, by (a), the $[u_i,u_{n+1}]$ for $i \le n$  form a basis of   a subspace $T_1\sub  Z$. Then $T_0 \cap T_1 =\{0\}$: if   $\sum_{i \le n}{\aaa_i}[u_i, u_{n+1}] = [\sum _{i \le n} {\aaa_i}u_i, u_{n+1}]\neq 0$ for some coefficients $\aaa_i$ with $0 \le \aaa_i< p$, then $r:= \sum_{i \le n} {\aaa_i}u_i \in V_{n+1} \setminus Z$ by    Claim~\ref{cl:subgroup}, and hence $[r, u_{n+1}] \not \in V_{n+1}$ by condition (i) of Claim~\ref{cl: uS}.  By Claim~\ref{cl:subgroup}  again this implies $[r, u_{n+1}]  \not \in T_0$.   This concludes the inductive step and verifies  the claim.

  Given $n$, by Claim~\ref{cl:subgroup}  we have \bc  $\prod_{i<k < n} [u_i, u_k]^{\gamma_{i,k}} \in V_n$  \ec for each array $\seq{\gamma_{i,k}}_{i<k < n}$ of exponents in $[0,p)$. By Claim~\ref{cl:indep}  all these elements are distinct. Since $V_n$ consists of strings of length $O(n)$,  we have $p^{n(n-1)/2}$ distinct strings of length $O(n)$, which is contradictory  for large enough~$n$.
\end{proof}
  
%
%

 \section{Quotients of $F_\infty$ with dependency between the commutators}

    As before,  let  $F_\infty$ denote the   group in the variety  $\+ N_{2,p}$  with   free generators $x_i$, $i \in \NN$. We next define     groups $E, H $ in $\+ N_{2,p}$ as quotients of $F_\infty$. 
For   $E$ we require that  all the  commutators $[x_i,x_\ell]$, $i< \ell$, be equal.  
For   $H$  we require   that  $[x_i,x_k]= z_k$ for $i< k$ where the $z_k$ are linearly independent over $\ZZ/p\ZZ$.  
Formally, we define the groups via presentations in the variety $\+ N_{2,p}$:
\begin{eqnarray}  E &=&   \la \{x_i\} \sN i,z  \colon  [x_i,x_k] =z  \  (0\le i<k) \ra   \label{f1} \\
 H &= &  \la  \{x_i\} \sN i,\{z_k\}_{ k \in \NN^+}  \colon   [x_i,x_k] =z_k  \   (i<k) \ra \label{f2} \end{eqnarray}
 Note that $E$   is  extra-special  as discussed in the introduction. 
In contrast, the centre of  $H$  has infinite dimension.

%


In a nilpotent group, each nontrivial normal subgroup intersects the centre non-trivially. This implies that every proper quotient of $E$ is abelian; in particular, $E$ is not residually finite. On the other hand,    $H$ is residually a finite $p$-group, and hence is canonically embedded   into its pro-$p$ completion. To see this, take an element $h \neq 1$ that can be written in terms of the  generators $x_0, \ldots, x_{n-1}$. Then $h \neq 1$ in the finite  $p$-group which is the   quotient of $H$ by the normal subgroup generated by the $x_k$, $k \ge n$ (which contains all the $z_k$, $k \ge n$). 

We supply two algebraic facts supporting the claim that the FA presentations of the groups we  provide in the next section will be nontrivial: neither can they be obtained from FA presentations of abelian groups, nor are they combinations of  FA presentations of simpler components.
\begin{prop} None   of the groups $  E, H$ has an abelian subgroup of finite index. \end{prop}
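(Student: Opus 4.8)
The plan is to rule out abelian finite-index subgroups in $E$ and $H$ simultaneously, by a short argument combining the pigeonhole principle with the class-$2$ commutator calculus.

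Suppose for a contradiction that $G$ is one of $E$, $H$ and that $G$ has an abelian subgroup $A$ of finite index $n$; we shall not need $A$ to be normal. The elements $x_0, x_1, x_2, \dots$ form an infinite family, and each lies in one of the $n$ left cosets of $A$, so by pigeonhole there is an infinite set $I \subseteq \NN$ on which all the $x_i$ lie in a single coset; equivalently $x_i^{-1}x_j \in A$ for all $i,j \in I$. Now fix indices $i < j < k$ in $I$. Since $A$ is abelian and contains the two elements $x_i^{-1}x_j$ and $x_i^{-1}x_k$, these two elements commute, i.e.\ $[\,x_i^{-1}x_j,\ x_i^{-1}x_k\,] = 1$.

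The next step is to compute this commutator inside $G$. Since $G$ has nilpotency class $2$, the commutator is a central, bimultiplicative pairing satisfying $[a^{-1},b] = [a,b]^{-1}$ and $[a,b] = [b,a]^{-1}$, and a routine expansion gives
\[
[\,x_i^{-1}x_j,\ x_i^{-1}x_k\,] \;=\; [x_i,x_j]\,[x_j,x_k]\,[x_i,x_k]^{-1}.
\]
Feeding in the defining relations of the two groups: in $E$ every factor on the right equals $z$, so the product is $z$; in $H$ we have $[x_i,x_j] = z_j$ and $[x_j,x_k] = [x_i,x_k] = z_k$, so the product is $z_j$. Combined with the equation $[\,x_i^{-1}x_j,\ x_i^{-1}x_k\,] = 1$ from the previous step, this forces $z = 1$ in $E$, respectively $z_j = 1$ in $H$ --- contradicting that $z$ generates a centre of order $p$ in $E$, and that the $z_k$ are linearly independent over $\ZZ/p\ZZ$, hence nonzero, in $H$.

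There is no genuine obstacle here; the only point requiring a moment of care is the sign bookkeeping in the commutator expansion, where one must verify that the three surviving commutators $[x_i,x_j]$, $[x_j,x_k]$, $[x_i,x_k]^{-1}$ do not cancel. They do not, precisely because the defining relations of $E$ and $H$ depend only on the ordering of the indices while $i < j < k$ are distinct, so a single copy of the relevant central generator survives. The same argument applies whenever a group in $\+ N_{2,p}$ contains an infinite family $y_0, y_1, \dots$ with $[y_i, y_k] = w$ for all $i < k$, where $w \neq 1$.
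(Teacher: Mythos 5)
Your proof is correct and is essentially the paper's argument: pigeonhole to place two differences of generators inside the putative abelian finite-index subgroup, then the class-$2$ commutator expansion to show they fail to commute because a nontrivial central element ($z$, resp.\ $z_j$) survives. The only cosmetic difference is that the paper first reduces $H$ to $E$ via the quotient identifying all $z_k$ with $z$, whereas you run the same computation in $H$ directly.
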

\begin{proof} It suffices to show this for $E$, because    $H$ has  $E$ as a quotient. So suppose $M$ is a  subgroup of $E$  with  finite index.   There are $k< r <s$ such that $x_r x_k^{-1}\in M$ and $x_s x_k^{-1}\in M$. We have  
$[x_r x_k^{-1} , x_s x_k^{-1}]= [x_r, x_s][x_r, x_k^{-1}][x_k^{-1},x_s]= z$, so these two elements of $M$ don't commute. \end{proof}
\begin{prop}  The groups $E$ and $H$ are  indecomposable. 
\end{prop}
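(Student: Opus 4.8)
The plan is to treat $E$ and $H$ separately; $E$ is short, and $H$ is where the work lies, exploiting the precise shape of its commutator form. For $E$ I would argue from the centre alone. Suppose $E = A \times B$ with $A,B$ nontrivial. Both are nontrivial normal subgroups of the nilpotent group $E$, so by the fact recalled above that a nontrivial normal subgroup of a nilpotent group meets the centre, $A \cap Z(E) \neq 1 \neq B \cap Z(E)$. These two subgroups of $Z(E)$ intersect inside $A \cap B = 1$, so $|Z(E)| \geq p^2$, contradicting that $E$ is extra-special, so $Z(E)$ is cyclic of order $p$. Hence $E$ is indecomposable. Here the cyclicity of $Z(E)$ is used essentially: the scalar-valued symplectic form on $E/Z(E)$ is itself decomposable, so a purely form-theoretic argument cannot work in this case.

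For $H$ I would first record that $Z(H) = H'$: writing a general element in the normal form $\ul\alpha\, c$ with $c$ central, centrality forces $[\ul\alpha, x_j]=1$ for every $j$, which by the commutator identity together with linear independence of the $z_k$ forces $\alpha = 0$. So $V := H/Z(H)$ is a $GF(p)$-vector space with basis the $\bar x_i$, $i \in \NN$, the space $W := Z(H) = H'$ has basis the $z_k$, $k \geq 1$, and the commutator form $\theta \colon V \times V \to W$ satisfies $\theta(\bar x_i, \bar x_j) = z_{\max(i,j)}^{\pm 1}$ for $i \neq j$. The engine of the proof is the observation that $\theta(\bar x_0, -) \colon V \to W$ is \emph{surjective with kernel exactly $GF(p)\,\bar x_0$}, since $\theta(\bar x_0, \bar x_j) = z_j$ for all $j \geq 1$.

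Now suppose $H = A \times B$. Passing to $V$ and $W$ yields internal decompositions $V = V_A \oplus V_B$ and $W = W_A \oplus W_B$, where $V_A = AZ(H)/Z(H)$, $W_A = A \cap Z(H) = Z(A)$, with $\theta(V_A, V_B) = 0$ and $\theta(V_A, V_A) \subseteq W_A$ (commutators of elements of $A$ lie in $A' \subseteq Z(A)$), and likewise for $B$. I would then show $\bar x_0 \in V_A$ or $\bar x_0 \in V_B$. Write $\bar x_0 = a_0 + b_0$ with $a_0 \in V_A$, $b_0 \in V_B$. Since $a_0$ annihilates $V_B$ one has $\theta(a_0, V) = \theta(a_0, V_A) \subseteq W_A$, and symmetrically $\theta(b_0, V) \subseteq W_B$; their sum is $\theta(\bar x_0, V) = W = W_A \oplus W_B$, so $\theta(a_0, V) = W_A$, $\theta(b_0, V) = W_B$, and therefore $\{v : \theta(a_0,v) = 0\} \cap \{v : \theta(b_0,v) = 0\} = \ker\theta(\bar x_0, -) = GF(p)\,\bar x_0$. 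As $\theta$ is alternating, $\theta(a_0, a_0) = 0$, and as $a_0 \in V_A$ is $\theta$-orthogonal to $b_0 \in V_B$ also $\theta(b_0, a_0) = 0$; hence $a_0 \in GF(p)\,\bar x_0$, and symmetrically $b_0 \in GF(p)\,\bar x_0$. Since $a_0 + b_0 = \bar x_0 \neq 0$ while $V_A \cap V_B = 0$, one of $a_0, b_0$ vanishes. Say $\bar x_0 \in V_A$; then $V_B \subseteq \ker\theta(\bar x_0, -) = GF(p)\,\bar x_0 \subseteq V_A$, so $V_B = 0$, i.e.\ $B \leq Z(H) = H'$. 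Finally $H' = [H,H] = [A,A] = A'$ because $B$ is central, so $B \leq H' \leq A$, whence $B \leq A \cap B = 1$. Thus every direct decomposition of $H$ is trivial.

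The step I expect to be the real obstacle is spotting the rigidity property of $\theta$: the existence of a vector (here $\bar x_0$) whose $\theta$-orthogonal complement is as small as possible, namely one-dimensional, while its image under $\theta$ is still all of $Z(H)$; once this is in hand the rest is mechanical linear algebra. The remaining ingredients are routine: the normal-form computation that $Z(H) = H'$, the standard identity $Z(A \times B) = Z(A) \times Z(B)$ together with $A \cap Z(H) = Z(A)$, and the fact that a nilpotent group equal to its own derived subgroup is trivial, which closes both cases.
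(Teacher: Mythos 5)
Your proof is correct, but it follows a genuinely different route from the paper's on both counts. For $E$, the paper argues that every proper quotient of $E$ is abelian (so a nontrivial direct factorization would exhibit $E$ as a product of two abelian groups), whereas you intersect both factors with the cyclic centre; both arguments are one line and both rest on the extra-special structure. For $H$, the paper's proof is a direct normal-form computation: it splits into cases according to whether some power of $\wt x_0$ lies in $\wt A$, and in each case exhibits an explicit pair of elements, one from each factor, whose commutator contains a nonzero power of some $z_k$. Your argument repackages the same underlying phenomenon --- that $x_0$ commutes, modulo the centre, with nothing outside its own span, while $[x_0,-]$ surjects onto $Z(H)$ --- as a rigidity property of the induced form $\theta\colon V\times V\to W$, and then runs a clean decomposition argument: $V=V_A\oplus V_B$, $W=W_A\oplus W_B$, $\theta(V_A,V_B)=0$, $\theta(V_A,V_A)\subseteq W_A$, forcing $\bar x_0$ into one factor and collapsing the other into $Z(H)=H'$. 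I checked the supporting facts you use ($Z(H)=H'$ via the $j=0$ commutator, $V_A\cap V_B=0$, $A\cap Z(H)=Z(A)$, and the surjectivity/kernel computation for $\theta(\bar x_0,-)$) and they all hold. What your version buys is a statement-shaped criterion (existence of a vector with one-dimensional radical and full $\theta$-image implies indecomposability of the central extension) that would apply to other groups in $\+N_{2,p}$ without redoing the coordinate computation; what the paper's version buys is brevity and no need to set up the quotient vector spaces. One cosmetic point: your closing appeal to ``a nilpotent group equal to its own derived subgroup is trivial'' is not actually used anywhere --- the argument already ends with $B\le H'=A'\le A$, hence $B\le A\cap B=1$ --- so you can delete that sentence.
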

\begin{proof}  
$E$ is indecomposable because each of its proper quotients is abelian.   Suppose that $H= A \times B$ for subgroups $A,B$.  Write $Z= Z(H)$, $\wt x_i = x_i Z$, $\wt A = AZ/Z$, and $\wt B= BZ/Z$. For a string $\beta$ over $\ZZ$ we write $\wt \beta = \prod_{i< |\beta|} \wt x_i ^{\beta_i} $. If  $A \sub Z$ then all $\wt x_i $ are in $\wt B$, so $z_k= [x_0,x_k]$ is  in $B$ for each $k \ge 1$, and hence  $B=H$.   By symmetry we may assume that $A, B \not \sub Z$.  

First suppose that $ \wt x_0^m \in \wt A$ for some $m$ with $0<m< p$. Then there is a string  $\beta$  of length at least $2$ such that $\wt \beta \in \wt B$. Hence $[x_0^m, \ul \beta]= \prod_{0< i < |\beta|}z_i^{m\beta_i}\neq e$, contradiction. 

Now assume otherwise. There are strings $\aaa, \beta$ over $\{0, \ldots, p-1\}$ with $\wt \aaa \in \wt A, \wt \beta \in \wt B$ such that $\wt x_0= \wt \aaa + \wt \beta$. By   assumption $\aaa_k \neq 0$ for some   $k>0$, chosen to  be least. Then $\beta_0= 1-\aaa_0$, $\beta_k = - \aaa_k$ and $\beta_r= 0$ for $0< r< k$. Hence,  using that the commutator is bilinear,   $[\ul \aaa, \ul \beta]$ contains a factor $z_k^\gamma$ where $\gamma = \aaa_k \beta_0 - \aaa_0 \beta_k = \aaa_k$. Thus $[\ul \aaa, \ul \beta]\neq e$, contradiction. 
%
%
%
%
%
\end{proof}

\section{The groups $E$ and $H$ are word automatic}  \label{s:EH}
  Recall that, fixing a prime $p$, the groups $E$ and $H$ are defined via   (\ref{f1}) and (\ref{f2}).  We will describe    FA presentations of these groups based on the  alphabet \bc $\Sigma=\{0, \ldots, p-1\}$. \ec 
The variables $\aaa, \beta, \gamma$ will denote strings over $\Sigma$. Recall that we write  $\ul {\alpha} $ for $ \prod_i x_i^{\alpha_i}$, where the $x_i$ are the generators given by the presentations.
 Using the normal form for elements in $F_\infty$ given in  (\ref{free normal form}),   each element of $E$ can be written in the  form 
  $ z^v \cdot \ul {\alpha} $ 
where   $0\le v <p$.   

 In this section all arithmetic is modulo~$p$.  Strings  $\alpha, \beta$  in expressions such as $\aaa + \beta$ are   thought of as extended by $0$s if necessary, and  will be  added component-wise.

In the case of the group $E$, one notes  that for $n= \max(|\aaa|, |\beta|)$,  \begin{equation} \label{eqn: check}  \ul {\alpha} \cdot \ul {\beta} =z^{-\sum_{k=1}^{n-1}\alpha_k (\sum_{i=0}^{k-1} \beta_i)} \cdot \ul{\alpha + \beta}.\end{equation}
  This is because   one can  calculate $\ul {\alpha} \cdot \ul {\beta}  = \prod_k x_k^{\alpha_k} \prod_i x_i^{\beta_i}$ by, for  decreasing positive~$k$,    moving    terms $x_k^{\alpha_k}$ to the right past the terms $x_i^{\beta_i}$ for $i=0, \ldots, k-1$, and then joining it with $x_k^{\beta_k}$ to form  $x_k^{\aaa_k + \beta_k}$. Each such move creates a factor $z^{-\alpha_k \beta_i}$ in the centre of $E$.

 \begin{prop} The group $E$ is word automatic. \end{prop}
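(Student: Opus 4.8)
The plan is to exhibit a concrete FA presentation of $E$ over the alphabet $\Sigma=\{0,\dots,p-1\}$ and check that the group multiplication is recognized by a finite automaton; as noted in Section~\ref{s:prelim}, recognizing $\circ$ suffices. Every element of $E$ has the unique normal form $z^v\cdot\ul\alpha$ with $0\le v<p$ and $\alpha$ a string over $\Sigma$ with last entry nonzero (or $\alpha$ empty), so I represent such an element by the string obtained by prepending the single digit $v$ to $\alpha$; thus the domain $D$ is the regular set of strings $v\alpha$ where $\alpha$ has no trailing $0$. The length-lexicographic normalization discussed after Definition~\ref{def:FA} lets me assume equality on $D$ is literal string equality.

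The core of the argument is the multiplication formula~(\ref{eqn: check}): writing the two input words as $z^{v}\ul\alpha$ and $z^{w}\ul\beta$, since $z$ is central the product is $z^{v+w-\sum_{k\ge 1}\alpha_k(\sum_{i<k}\beta_i)}\cdot\ul{\alpha+\beta}$, with all arithmetic mod $p$. So the automaton must, reading the two input strings stacked with the filler symbol $\Diamond$, simultaneously (a) verify that the $\alpha$-part and $\beta$-part of each input are the trailing-zero-free normal forms, (b) compute $\alpha+\beta$ componentwise and check it against the claimed output word's $\alpha+\beta$-part, (c) accumulate the partial sums $S_k=\sum_{i<k}\beta_i \bmod p$ and the running correction $\sum_{k<\ell}\alpha_k S_k \bmod p$, and (d) at the end combine this with $v+w\bmod p$ and check it equals the output's leading digit. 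All of this is finite-state: the only quantities that need to be remembered between letters are the two bounded-size running sums $S_k$ and $\sum\alpha_kS_k$, each an element of $\ZZ/p\ZZ$, plus bounded bookkeeping about whether we have yet seen the leading digit and whether trailing zeros are admissible. One subtlety is the bookkeeping for the correction term: the leading digit $v$ of the first input and $w$ of the second are read before any of the $\alpha_k,\beta_k$, whereas the correction $-\sum\alpha_k S_k$ is only fully known at the end; but since $v,w,$ and the output's leading digit are each a single digit at the front of their respective stacked tracks, the automaton can store $v$ and $w$ (two more $\ZZ/p\ZZ$ values) and the claimed output leading digit at the start and do the final check on reaching the end of input. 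Handling the differing lengths of $\alpha$ and $\beta$ via the filler $\Diamond$, with the convention that missing components are $0$, is routine.

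Alternatively, and more cleanly for the write-up, I can invoke the general principle from Section~\ref{s:prelim} that a relation first-order definable from an existing FA presentation is again FA-recognizable: it suffices to produce \emph{some} FA presentation and note that $\circ$ is obtained by a finite-state computation. Concretely, $E$ is a central extension $1\to\ZZ/p\ZZ\to E\to V\to 1$ where $V=(\ZZ/p\ZZ)^{(\omega)}$ is the elementary abelian group on the $x_i$, and the cocycle is $f(\alpha,\beta)=-\sum_{k\ge1}\alpha_k\sum_{i<k}\beta_i\bmod p$; the point is that $V$ is word automatic in the obvious way and $f$ is FA-recognizable as a function of the pair of normal-form strings, so the extension is FA-recognizable. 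This is exactly the co-cycle method the abstract promises will be developed in the last section, so I expect the actual proof in the paper to either do the bare-hands automaton or forward-reference that method. The main obstacle — really the only place where care is needed — is confirming that the correction term $f(\alpha,\beta)$ can be accumulated by a finite automaton reading left to right: this works precisely because $f$ is a sum of products $\alpha_k S_k$ with $S_k=\sum_{i<k}\beta_i$ depending only on a prefix of $\beta$, so a two-register (mod $p$) sweep computes it, and this is the single computation the proof should spell out.
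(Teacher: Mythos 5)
Your proposal is correct and follows essentially the same route as the paper: represent $z^v\cdot\ul\alpha$ by the string $v\alpha$ with no trailing zeros, and have the automaton verify the multiplication formula~(\ref{eqn: check}) by a left-to-right sweep that checks $\aaa_k+\beta_k=\gamma_k$, accumulates $\sum_{i<k}\beta_i$ and the correction $\sum_k \alpha_k(\sum_{i<k}\beta_i)$ modulo $p$ in its state, and compares against the stored leading digits at the end. Your alternative cocycle formulation is also anticipated by the paper, which notes in a remark in the final section that the presentations of $E_p$ and $H_p$ can be recast (and slightly simplified) via the cocycles given by~(\ref{eqn: check}) and~(\ref{eqn:check2}).
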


\begin{proof}   
An element   $  z^v \cdot \ul {\alpha}  $  is represented by the string  $v \alpha$.  The domain consists of the strings $v\aaa$ such that $\aaa$ is empty, or its last entry is not $0$.

We describe an FA that checks (\ref{eqn: check}), and hence correctly verifies the binary group operation. It  processes an   input composed of a triple of strings $v\alpha, w\beta, r\gamma $ stacked on three tracks.      If necessary, we extend the    strings     by~$0$s to   make them have length $1+n$ where $n = \max(|\aaa|, |\beta|, |\gamma|)$.
So the FA processes an input over the alphabet $\Sigma^3$ in this format:
\begin{center}
$s_{-1} s_0\ldots s_{n-1} = \begin{array}{|l|l|l|l|l|l|l}
\hline
v & \aaa_0 & \aaa_1 & \cdots & \aaa_{n-1} \\
\hline 
w & \beta_0  & \beta_1 & \cdots &  \beta{n-1} \\
\hline
r& \gamma_0  & \gamma_1 & \cdots &  \gamma_{n-1}  \\

\hline

\end{array}$
\end{center}
When the  FA scans  the first stack symbol     $s_{-1} $,    it stores it in the constant size memory given by  the state. As it scans $s_k$ for $k=0, \ldots, n-1$  it   checks  that  $\aaa_k + \beta_k = \gamma_k$ (if this fails,    it enters a rejecting state and remains in it until the whole  input has been  scanned). The FA  stores  the current value $\sum_{i< k} \beta_i$ in its constant size internal memory. It    adds   $\alpha_k (\sum_{i< k} \beta_i)$ to a variable $x$ ranging over~$\Sigma$, with initial value $0$,  also thought of as   stored     in the internal memory. After scanning the last symbol it checks whether $v+w-x =r \mod p $, and accepts accordingly.     \end{proof}

Each element of $H$  can be written in the form 
\bc $\prod_{s < |\aaa| }  z_s^{v_{s}}  \cdot \ul {\alpha}  $ \ec
where  $0\le v_{s} <p$. The central elements are the ones where $\aaa$ consists only of $0$s.  
If  $n= \max(|\aaa|, |\beta|)$,  similar to (\ref{eqn: check}) we have in $H$ that 
 \begin{equation} \label{eqn:check2} \ul {\alpha} \cdot \ul {\beta} = \prod_{k=1^{n-1}} z_k^{-\alpha_k (\sum_{i< k} \beta_i)}  \cdot  \ul {\alpha + \beta}.  \end{equation}
 
\begin{prop} The group  $H$ is word automatic. \end{prop}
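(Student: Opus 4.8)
The plan is to mimic the FA presentation of $E$, but now track a whole vector of central exponents rather than a single one. Each element of $H$ is represented by a string whose $k$-th column carries both the generator exponent $\alpha_k$ and the central exponent $v_k$ attached to $z_k$; concretely, one works over the alphabet $\Sigma \times \Sigma$ (or equivalently a pair of tracks over $\Sigma$), representing $\prod_{s<|\alpha|} z_s^{v_s} \cdot \ul{\alpha}$ by the string $\binom{v_0}{\alpha_0}\binom{v_1}{\alpha_1}\cdots$. Since $z_0$ does not occur (the presentation uses $z_k$ only for $k \ge 1$, as $[x_i,x_k]=z_k$ requires $i<k$), one simply sets the $v_0$-slot to $0$ always, or shifts indices; either convention is harmless. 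The domain is the regular set of such strings whose last column is not $\binom{0}{0}$ (to get a unique normal form), which is clearly FA-recognizable.

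Next I would build the automaton checking the multiplication law (\ref{eqn:check2}). Given three stacked strings representing $g = \prod z_s^{v_s}\ul\alpha$, $h = \prod z_s^{w_s}\ul\beta$, and a candidate product $\prod z_s^{r_s}\ul\gamma$, the identity (\ref{eqn:check2}) together with the fact that central exponents simply add componentwise tells us the correct answer is $\gamma = \alpha+\beta$ and $r_k = v_k + w_k - \alpha_k\bigl(\sum_{i<k}\beta_i\bigr) \bmod p$ for each $k$. The crucial observation — and the reason this works for an FA at all — is that unlike the $E$ case, there is \emph{no} global sum to accumulate: the correction term for $z_k$ depends only on $\alpha_k$ and on $\sum_{i<k}\beta_i$, a running partial sum of a single track. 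So as the automaton scans left to right, it need only keep in its finite state: the running value $\sum_{i<k}\beta_i \bmod p$ (one element of $\Sigma$), and it checks at column $k$ both that $\alpha_k+\beta_k = \gamma_k$ and that $v_k + w_k - \alpha_k\cdot(\text{current running sum}) = r_k \bmod p$, updating the running sum by adding $\beta_k$ afterwards. If any check fails it goes to a rejecting sink; otherwise it accepts at the end (after verifying the strings were padded consistently to common length, as in the $E$ proof). This is constant-memory, hence realizable by an FA, so $H$ is word automatic.

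The only mild obstacle is bookkeeping around string lengths and padding: one must extend the three input strings with $\binom{0}{0}$-columns to a common length $1+n$ (or $n$, depending on the index convention) and make sure that the unique-normal-form condition on the domain is compatible with the multiplication automaton's output — i.e., that the automaton never produces a string with trailing $\binom{0}{0}$ as an accepted product unless the shorter-padding representative is the one actually presented. This is handled exactly as in the $E$ proof: the domain already forces representatives to be non-padded, and the multiplication automaton is only asked about triples drawn from the domain. A second, even smaller point is verifying (\ref{eqn:check2}) itself, which follows from the same "move $x_k^{\alpha_k}$ past $x_i^{\beta_i}$ for $i<k$" computation as (\ref{eqn: check}), except that each transposition now contributes $z_k^{-\alpha_k\beta_i}$ (the commutator $[x_i,x_k]=z_k$) rather than $z^{-\alpha_k\beta_i}$; collecting these and using bilinearity of $\theta_H$ gives the stated formula, and the central exponents already present in $g$ and $h$ just add on since $Z(H)$ is abelian and these $z_k$ are central. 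With (\ref{eqn:check2}) in hand and the automaton described above, the proposition follows.
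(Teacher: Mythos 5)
Your proposal is correct and follows essentially the same route as the paper: the same two-track representation $\prod_k z_k^{v_k}\cdot \ul\alpha \mapsto (\alpha, v)$ with $v_0=0$ and no trailing zero column, the same identity (\ref{eqn:check2}), and the same automaton that keeps only the running sum $\sum_{i<k}\beta_i \bmod p$ in its state while checking $\alpha_k+\beta_k=\gamma_k$ and $v_k+w_k-\alpha_k(\sum_{i<k}\beta_i)=r_k \bmod p$ column by column. The padding and normal-form bookkeeping you flag is handled in the paper exactly as you describe.
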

\begin{proof}  An element $h$ of the group $H$ will be  represented by a pair of strings  $\alpha,   v$ over $\Sigma$, of the same length $n$, such that if $n>0$ then  $v_0= 0$,     not both end in $0$,    and  $h=  \prod_{k=1}^n z_k^{v_k} \cdot \ul {\alpha} $. These strings are written on two tracks, with  $\alpha$ on top of~$  v$.

  We describe an FA that checks  (\ref{eqn:check2}), and thus  correctly verifies the group operation.
 It processes  strings over  the  alphabet $\Sigma^6$ of the  format
\begin{center}
$s_0\ldots s_{n-1}= \begin{array}{|l|l|l|l|l|l|l}
\hline
  \aaa_0 & \aaa_1 & \cdots & \aaa_{n-1} \\
  
  \hdashline 
    v_0 & v_1 & \cdots & v_{n-1} \\
\hline 
  \beta_0  & \beta_1 & \cdots &  \beta{n-1} \\
    \hdashline 
    w_0 & w_1 & \cdots & w_{n-1} \\
\hline
  \gamma_0  & \gamma_1 & \cdots &  \gamma_{n-1}  \\
    \hdashline 
    r_0 & r_1 & \cdots & r_{n-1} \\
\hline

\end{array}.$
\end{center}
  At the beginning,  the FA scans $s_0$ and checks that $v_0= w_0= r_0 = 0$. As in the case of $E$, when the FA scans $s_k$ for $k \ge 0$,     it   checks  that  $\aaa_k + \beta_k = \gamma_k$, and   stores  the current value $\sum_{i< k} \beta_i$ in its constant size internal memory. However, now it also  checks whether \bc $v_k+w_k-\alpha_k (\sum_{i< k} \beta_i) =r_k\mod p$  \ec   (which holds trivially if $k=0$).  If any of these checks fail it enters a rejecting state. Otherwise,  when the whole input is scanned it accepts.
\end{proof}

B\"uchi automata are  (nondeterministic) finite automata that  work on infinite words. Such a word is accepted if some computation processing it is infinitely often in an accepting state. B\"uchi automatic structures were first considered by Hodgson~\cite{Hodgson:83}, who called them macro-automatic. For background  see \cite[Section 2.1]{Nies:DescribeGroups}.
 We  sketch an example of a  non-abelian uncountable group that    is B\"uchi automatic in a nontrivial way. 
\begin{fact} The pro-$p$ completion   $\hat H_p$ of $H_p$ is B\"uchi automatic. \end{fact}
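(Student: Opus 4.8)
The strategy is to adapt the FA presentation of $H_p$ (from the previous proposition) to the pro-$p$ completion by switching from finite words to infinite words processed by a B\"uchi automaton. An element of $\hat H_p$ is, morally, a "limit" of elements of $H_p$; concretely, since $H_p$ is residually a finite $p$-group with the congruence filtration given by quotients modulo the normal closure of $\{x_k : k \ge n\}$, an element of $\hat H_p$ is determined by a coherent sequence of exponents. This suggests representing an element by an infinite word: on two tracks, an infinite sequence $\aaa = \aaa_0 \aaa_1 \aaa_2 \cdots \in \Sigma^\NN$ recording the exponents of the $x_i$, and an infinite sequence $v = v_0 v_1 v_2 \cdots \in \Sigma^\NN$ with $v_0 = 0$ recording the exponents of the central generators $z_k$, so that the element is the "infinite product" $\prod_{k\ge 1} z_k^{v_k}\cdot \prod_i x_i^{\aaa_i}$, interpreted in $\hat H_p$. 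Every such pair gives a well-defined element of $\hat H_p$, and distinct reduced pairs give distinct elements, so the domain is simply $\{(\aaa,v) \in \Sigma^\NN \times \Sigma^\NN : v_0 = 0\}$, which is $\w$-regular (B\"uchi-recognizable).

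The key step is to verify that the multiplication relation is B\"uchi-recognizable. The point is that the formula~(\ref{eqn:check2}) computing $\ul\aaa \cdot \ul\beta$ is \emph{local}: the $k$-th output coordinate depends only on $\aaa_k$, $\beta_k$, and the running partial sum $\sum_{i<k}\beta_i \bmod p$, which lives in the finite set $\Sigma$. Thus the automaton of the finite case works almost verbatim: scanning the $k$-th stacked symbol, it checks $\aaa_k + \beta_k = \gamma_k$ and $v_k + w_k - \aaa_k(\sum_{i<k}\beta_i) = r_k \bmod p$, maintaining the one-symbol running sum in its state. The only change is that there is no "end of input": acceptance is by the B\"uchi condition, so one simply makes the automaton have a single accepting state that it returns to after every correctly-processed symbol (and a non-accepting absorbing reject state), so that a word is accepted iff \emph{every} coordinate check passes. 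One must also check, at the first symbol, that $v_0 = w_0 = r_0 = 0$. Once multiplication is B\"uchi-recognizable, inversion and equality are definable, so $\hat H_p$ is B\"uchi automatic.

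There is one genuine subtlety worth flagging as the main obstacle: one must make sure this really presents the \emph{pro-$p$ completion} and not some other group, i.e., that the infinite-product expressions together with the coordinatewise operation faithfully realize the inverse-limit group structure, including associativity and the topology. Concretely, one needs to know that the map from $\Sigma^\NN \times \{0\}\times\Sigma^{\NN}$ (reduced pairs) to $\hat H_p$ is a bijection respecting the operation, which amounts to checking that truncating a pair after coordinate $n$ and reading it in the finite quotient $H_p / \langle\langle x_k : k\ge n\rangle\rangle$ is compatible with the bonding maps of the inverse system — a routine but necessary coherence check using the normal form~(\ref{free normal form}) and the locality of~(\ref{eqn:check2}). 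Everything else (recognizability of the domain, of multiplication, definability of the remaining operations) is immediate from the finite-word case.
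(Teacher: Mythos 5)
Your proposal is correct and follows essentially the same route as the paper: elements of $\hat H_p$ are coded by pairs of infinite words over $\Sigma$ interpreted as infinite products (limits in the pro-$p$ topology), and the finite-word automaton for the cocycle identity (\ref{eqn:check2}) is reused verbatim on infinite inputs with a trivial B\"uchi acceptance condition. The coherence check you flag (compatibility of truncation with the quotients by the normal closures of $\{x_k : k \ge n\}$) is exactly what the paper's appeal to ``forming the limits'' leaves implicit, so your write-up is, if anything, slightly more careful than the sketch in the paper.
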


\begin{proof} For each pair of  infinite words  $\aaa$, $v$  over $\{0, \ldots, p-1\}$, in  $\hat H_p$  one can form the limits $\prod_i x_i^{\aaa_i} \colon = \lim_n  \prod_{i<n} x_i^{\aaa_i}$ and  $\prod_{k> 0}  z_k^{v_k}$.  An element~$h$ of the group $\hat H_p$ is represented by a pair of {infinite} words  $\alpha, v$ such that $h=  \prod_{k>0}  z_k^{v_{k-1}} \cdot \prod_i x_i^{\alpha_i}$.  
%
 We can use the same automaton as above, now   working on infinite words, to verify  the  binary group operation on $\hat H_p$.  \end{proof}
 
 \section{Constructing  word automatic groups via cocycles} 
 
We  review some well-known facts on central extensions of abelian groups. Given two abelian groups $A$ and $Q$, a   \emph{central extension} of $Q$ by $A$
is an   exact sequence 
$0 \to A \to L \to Q \to 0$ such that  $A \sub Z(L)$.    Cocycles are used to describe such extensions.     A~\emph{cocycle} is a  function $f \colon Q \times Q \to A$  such that    
\[ f(u,v)+ f(u+v, w)=f(v,w) + f(u,v+w).\] 
On $Q \times A$, the operation 
\[(u,a) \cdot  (v,b) : = (u+ v, a+ b + f(u,v))\]
defines a group $L_f$, which is abelian iff $f$ is symmetric.  It is easy to verify that with the maps $a \to (0,a)$ and $(u,a) \to u$, one obtains an exact sequence $0 \to A \to L_f \to Q \to 0$. 
   The inverse of $(u,a)$ is $(-u.-a-f(u,-u))$. For associativity,   if we calculate $[(u,a) + (v.b)]+ (w,c)$, the ``correcting term" in the second component on the right side is $f(u,v) + f(u+v,w)$. If we calculate $(u,a) + [(v.b)+ (w,c)]$, the correcting term is $f(v,w)+ f(u,v+w)$.

   Conversely, given an   exact sequence $0 \rightarrow A \rightarrow L \rightarrow Q \rightarrow 0$ with $A \le Z(L)$,   to determine a 2-cocycle $c \colon Q \times Q \rightarrow A$ that yields an equivalent extension, 
one picks a set $T$ of coset representatives  for $A$ in $L$.  
 Fixing a bijection $Q \to T$  and writing $\ol q$ for the image of $q$,
the  cocycle is given by \bc $f(q_0,q_1) \colon = \ol {q_0 \phantom +}  \cdot  \ol {q_1 \phantom + } \cdot  (\ol {q_0+ q_1}) ^{-1} \in A$.\ec   
For detailed background  see e.g.\  Fuchs~\cite[Ch.\ 9]{Fuchs:15} (who calls these objects   ``extensions of $A$ by $Q$",  but uses the same order, first  $Q$ then $A$,  in the  notation).

\begin{prop}  \label{thm:cocyc}  Let $A$ and $ Q$ be word automatic  abelian groups.  Let $L $ be a central  extension of $Q$ by $A$, given by an exact sequence $0 \rightarrow A \rightarrow L \rightarrow Q \rightarrow 0$ with $A \le Z(L)$.    Suppose some   cocycle $f \colon Q \times Q \rightarrow A$ describing this extension is FA recognizable.   Then $L$ is word automatic.   \end{prop}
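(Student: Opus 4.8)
The plan is to realize $L$ concretely as the group $L_f$ on the underlying set $Q \times A$ described just before the statement, and to represent an element $(q,a)$ of $L_f$ by the convolution (stacking) of a string for $q$ taken from the given presentation of $Q$ with a string for $a$ taken from the given presentation of $A$. Since the extension $L$ is equivalent to $L_f$ (this is the content of the paragraph preceding the statement), it suffices to show that $L_f$ with this representation is word automatic; and by the remark in Section~\ref{s:prelim} it is enough to exhibit a finite automaton recognizing the binary group operation, as the inverse is then first-order definable.

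First I would normalize the data. Replacing the two given presentations as in Section~\ref{s:prelim}, assume that $D_Q \subseteq \Sigma^*$ and $D_A \subseteq \Sigma^*$ are regular sets over a common alphabet $\Sigma$ on which equality is literal equality of strings, that addition in $Q$ and in $A$ is recognized by finite automata over $(\Sigma \cup \{\Diamond\})^3$, and that ``$f$ is FA recognizable'' means that $\{\,(q,q',a) : q,q'\in D_Q,\ a\in D_A,\ f(q,q')=a\,\}$ is recognized by a finite automaton over $(\Sigma \cup \{\Diamond\})^3$. Take as the domain of the presentation of $L$ the set $D_L = \{\,q \otimes a : q \in D_Q,\ a \in D_A\,\}$, where $q \otimes a$ is the convolution of $q$ and $a$ (the shorter string padded with $\Diamond$). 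This is regular because $D_Q$ and $D_A$ are, and it represents every element of $L_f$ exactly once, so equality on $D_L$ is literal equality of strings and hence FA recognizable.

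For the operation, recall that $(q,a)\cdot(q',a') = (q+q',\ a+a'+f(q,q'))$. Thus, on input a triple of elements $q\otimes a$, $q'\otimes a'$, $q''\otimes a''$ of $D_L$ stacked on three tracks, a finite automaton can first unpack the six component strings onto six tracks --- a purely local relettering, modulo routine $\Diamond$-padding bookkeeping --- and then must accept exactly when \textbf{(a)} $q+q'=q''$ holds in $Q$, and \textbf{(b)} $a+a'+f(q,q')=a''$ holds in $A$. Condition (a) is recognized by the addition automaton of $Q$ restricted to the three relevant tracks. For condition (b), note that $\{\,(q,q',a,a',a'') : a+a'+f(q,q')=a''\,\}$ is the projection onto the first five coordinates of the set of tuples $(q,q',a,a',a'',d,e)$ satisfying $f(q,q')=d$, $a+a'=e$, and $e+d=a''$; this last set is a finite intersection of cylindrifications of the regular graph of $f$ and the regular graph of addition in $A$, hence regular, and regular relations are closed under projection (this is exactly the closure under existential quantification recorded in Section~\ref{s:prelim}, via determinization of nondeterministic automata). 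So condition (b), and therefore the conjunction of (a) and (b), is FA recognizable; this gives a finite automaton for the operation on $D_L$, and an FA presentation of $L$.

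The one point that needs care --- and essentially the only obstacle --- is the bookkeeping around convolutions and $\Diamond$-padding: one must verify that unpacking a stack of convoluted pairs and then recombining the resulting relations on six tracks by intersection, cylindrification and projection stays inside the class of regular relations, and that the various automata tolerate trailing $\Diamond$'s in the tracks they consume. This is routine given the standard closure properties of finite automata, but it is where the whole verification sits. No new structural fact about $A$, $Q$ or $f$ is needed beyond the observation that the operation of $L_f$ is defined from addition in $Q$, addition in $A$, and the graph of $f$ by a positive existential formula.
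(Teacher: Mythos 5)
Your proposal is correct and follows essentially the same route as the paper: realize $L$ as $L_f$ on $Q\times A$ and observe that its operation is definable from $+_Q$, $+_A$ and the graph of $f$. The paper simply cites the fact (recorded in Section~2) that first-order definable relations over the word automatic two-sorted structure $(Q\sqcup A, +_Q, +_A, f)$ are FA recognizable, whereas you unpack that citation into the explicit convolution, cylindrification, intersection and projection steps --- same content, just spelled out by hand.
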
  
\begin{proof} $L$ can be constructed as the set $Q \times A$ with the operation  given above.
 So $L$  can be
interpreted in a first-order way  in the word automatic two-sorted  structure   $ (Q \sqcup A, +_Q, +_A, f)$. This shows that $L$ is word automatic. \end{proof}

Thus, if we can choose $T$ so that the corresponding cocycle $f$ can be computed by a finite automaton, we   obtain an FA-presentation for $L$.    
As an example of how to apply Prop.~\ref{thm:cocyc}, we revisit a   group, introduced in   \cite[Example 12]{Nies.Thomas:08},  that does not have an abelian subgroup of finite index. We give a short proof based on cocycles that this group  is  word automatic.   
 \begin{example} \label{ex: Thomas} Let the group $L$ have  generators $x, y_i, z_k$ ($i,k \in \NN$) subject to the relations
\bc  $y_i^2 = z_k^2 =1  \qquad 
 [y_i,z_k]= [y_i, x]= 1 \qquad 
   z_i^{-1}x z_i = xy_i$   \ec
 We have an  exact sequence $0 \rightarrow A \rightarrow L \rightarrow Q \rightarrow 0$ 
where   $A= Z(L) =  \la x^2, \{y_i\}\sN i \ra$,  and
$Q:= L/A  $ is the direct sum of groups $Ax$ and $Az_i$.  Write $v= Ax$ and $w_i= Az_i$,   so that $v^2 =w_i^2= 1$ in $Q$.
Elements  of $Q$ have  a normal form $q_{s, \aaa}=v^s \cdot \prod_i w_i^{\aaa_i}$ where $s=0,1$ and $\aaa$ is a  bit string (thought to be extended by $0$s if necessary).   
 
The transversal $T$ consists of the elements      $\ol {q_{s, \aaa}}= x^s   \prod_i z_i^{\aaa_i} \in L$.      
     Using that $z_i x= xz_i y_i$, one verifies that the corresponding  2-cocycle is \bc $c(q_{s, \aaa},q_{t, \beta})= \ol {q_{s, \aaa} \phantom +}  \cdot  \ol {q_{t, \beta} \phantom + } \cdot  (\ol {q_{s, \aaa}+ q_{t, \beta}}{ \, }) ^{-1}= x^{2s+t} \prod_i y_i^{\gamma_i} $  \ec where $\gamma_i = (s+2t) \aaa_i + (s+t) \beta_i \mod 2$.  It is clear that this cocycle can be computed by an FA.     \end{example}

\begin{remark}  {\rm Nies and Semukhin~\cite[Thm.\ 4.2]{Nies.Semukhin:09} showed that  an abelian group that has a word automatic normal subgroup    of finite index of  is in itself word automatic. By Prop~\ref{thm:cocyc} this actually holds without the restriction to abelian groups.} \end{remark}

\begin{remark} {\rm In Section~\ref{s:EH} we showed that   the groups $E_p$ and $H_p$ are word automatic. The automata in the proofs  can also be somewhat simplified using   cocycles.  The elements of the form $\ul \aaa$ form a transversal for the extension. The cocycles  are then given in (\ref{eqn: check}) and (\ref{eqn:check2}), respectively.  For $H$, say, the cocycle maps pairs $\wt \aaa, \wt \beta$ of elements of $Q = H/Z(H)$, where $|\aaa| = |\beta| =n$,  to elements $\prod_{k=1}^{n-1} z_k^{r_k}\in Z(H)$. So the  automaton  verifying the cocycle processes strings of the format 

\begin{center}
$t_0\ldots t_{n-1}= \begin{array}{|l|l|l|l|l|l|l}
\hline
  \aaa_0 & \aaa_1 & \cdots & \aaa_{n-1} \\

\hline 
  \beta_0  & \beta_1 & \cdots &  \beta{n-1} \\
   
\hline
  
    0 & r_1 & \cdots & r_{n-1} \\
\hline

\end{array}.$
\end{center}
As it scans the symbols $t_0, \ldots, t_k, \ldots$, it stores  the current value $\sum_{i< k} \beta_i$ in its constant size internal memory. It   checks whether   $ -\alpha_k (\sum_{i< k} \beta_i) =r_k\mod p$; else it enters a rejecting state.   Otherwise, when the whole input has been scanned, it accepts.
}
\end{remark}

%

%
%
\def\cprime{$'$} \def\cprime{$'$}

%

\end{document}